\newtheorem{Proposition}{Proposition}
\newtheorem{Theorem}{Theorem}
\newcommand{\me}{\mathrm{e}}
\newcommand{\mi}{\mathrm{i}}
\author{K.\,A. Rybakov}
\title[On the orthogonal expansion of iterated Stratonovich stochastic integrals]{On the orthogonal expansion of\\iterated Stratonovich stochastic integrals}
\begin{document}

\maketitle

\begin{center}

\vskip -3.5ex

Moscow Aviation Institute (National Research University);\\
125993, Moscow, Volokolamskoe Hwy, 4;\\
rkoffice@mail.ru
\end{center}

\vskip 2.5ex

\textbf{Abstract.} We consider a class of functions for which the multiple Stratonovich stochastic integral or equivalent iterated Stratonovich stochastic integral with square integrable weights is defined by the orthogonal expansion. The equality of the trace of expansion coefficients matrix for these functions and the corresponding integral trace is established.

\vskip 0.5ex

\textbf{Keywords:} trace class operators, multiple Stratonovich stochastic integrals, iterated Stratonovich stochastic integrals, orthogonal expansion

\vskip 0.5ex

\textbf{MSC:} 47B10; 60H35

\makeatletter{\renewcommand*{\@makefnmark}{}
\footnotetext{This is a translation of the article: Rybakov, K.A. On the orthogonal expansion of the iterated Stratonovich stochastic integrals. {\em Herald\;of\;Dagestan\;State\;University. Series 1. Natural Sciences} {\bf 2022}, {\em 37(2)}, 27--32. (in Russian) \linebreak \url{https://doi.org/10.21779/2542-0321-2022-37-2-27-32}}}

\renewcommand{\thefootnote}{\fnsymbol{footnote}}

\thispagestyle{empty}

\section{Introduction}\label{secIntro}

The article presents a theorem that is of great importance for the representation of iterated Stratonovich stochastic integrals over Wiener processes with applications to solving numerically systems of stochastic differential equations~\cite{Ref1, Ref2}.

In the orthogonal expansion of multiple and iterated Stratonovich stochastic integrals of multiplicity $k$ for functions from $L_2(\mathds{T}^k)$, $\mathds{T} = [t_0,T]$, it becomes necessary to consider a special class of functions, namely the linear subspace of $L_2(\mathds{T}^k)$, the space of real-valued square-integrable functions. For functions from this class, integral traces are defined over different pairs of variables, as well as for expansion coefficients relative to basis functions (all possible products of basis functions of $L_2(\mathds{T})$), matrix traces are defined over the corresponding pairs of indices. In~\cite{Ref4}, the special space $L_2^{\text{tr}(j_1 \dots j_k)}(\mathds{T}^k)$ is introduced for this purpose, where $j_1,\dots,j_k$ are some natural numbers, the indices of independent Wiener processes~\cite{Ref1, Ref2, Ref3, Ref4}.

Consider an example for $k = 2$. Let $f \in L_2(\mathds{T}^2)$, $\{q_i\}$ is a basis of $L_2(\mathds{T})$, and $F_{ij}$ are expansion coefficients of the function $f$ relative to the basis $\{q_i q_j\}$:
\begin{equation}\label{eq1}
  F_{ij} = \int_{\mathds{T}^2} {f(t,\tau) q_i(t) q_j(\tau) dt d\tau}, \ \ \ i,j = 0,1,2,\ldots
\end{equation}
Then, for the correct definition of the multiple Stratonovich stochastic integral for the function~$f$ (one of two possible variants) and its orthogonal expansion, the following condition should be satisfied:
\[
  \sum\limits_{i=0}^\infty {F_{ii}} = \int_\mathds{T} {f(t,t) dt},
\]
where the left-hand side is the matrix trace and the right-hand side is the integral trace. In this case, $f \in L_2^{\text{tr}(j_1 j_2)}(\mathds{T}^2)$ for $j_1 = j_2$. It is well known that such a condition is not satisfied for arbitrary functions $f \in L_2(\mathds{T}^2)$. Additionally, the integral on the right-hand side must be considered for continuous functions or interpreted in a specific manner by applying an averaging operator for the function $f$, which can be defined differently on the diagonal of the square $\mathds{T}^2$ in $L_2(\mathds{T}^2)$, to specify a function that is uniquely defined on $\mathds{T}^2$ or on the diagonal and coincides with the original one almost everywhere~\cite{Ref5, Ref6}.

For arbitrary $k > 2$, it is necessary to consider similar conditions for all possible pairs or for some pairs of indices and variables~\cite{Ref7, Ref8}: such pairs arise under the condition that the integration is carried out over coinciding Wiener processes (for multiple or iterated Stratonovich stochastic integrals).

Further, we focus on the function $f(t,\tau) = \varphi(t) \psi(\tau) 1(t-\tau)$, where $1(t-\tau)$ is a unit step function. The multiple stochastic integral for such a function coincides with the iterated stochastic integral, which corresponds to a second-order system of linear stochastic differential equations. In this context, $\varphi$ and $\psi$ are weight functions. In~\cite{Ref1, Ref2}, the equality
\[
  \sum\limits_{i=0}^\infty {F_{ii}} = \frac{1}{2} \int_\mathds{T} {\varphi(t) \psi(t) dt}
\]
is proved under the condition that $\varphi$ and $\psi$ are continuously differentiable functions, and $\{q_i\}$ are Legendre polynomials or trigonometric functions (Fourier basis).\footnote{See Appendix.} In this article, such an equality is proved for $\varphi,\psi \in L_2(\mathds{T})$ and an arbitrary basis $\{q_i\}$.

\section{Necessary definitions and preliminary results}\label{secPreliminary}

We will consider linear operators $\operatorname{F}$ in the space $L_2(\mathds{T})$, $\mathds{T} = [t_0,T]$, determined by the relation
\[
  \operatorname{F} \! g(t) = \int_\mathds{T} {f(t,\tau) g(\tau) d\tau} \ \ \ \forall g \in L_2(\mathds{T}),
\]
where $f \in L_2(\mathds{T}^2)$ is the operator kernel. This operator is called a trace class operator~\cite{Ref6, Ref9} if there exist functions $f_1,f_2 \in L_2(\mathds{T}^2)$ that
\begin{equation}\label{eq2}
  f(t,\tau) = \int_\mathds{T} {f_1(t,\xi) f_2(\xi,\tau) d\xi}.
\end{equation}

\begin{Theorem}[see~\cite{Ref5, Ref6}]\label{thm1}
Let the linear operator $\operatorname{F} \colon L_2(\mathds{T}) \to L_2(\mathds{T})$ with the kernel $f \in L_2(\mathds{T}^2)$ is a trace class operator, and $\{q_i\}$ is a basis of $L_2(\mathds{T})$. Then
\begin{equation}\label{eq3}
  \sum\limits_{i=0}^\infty F_{ii} = \int_\mathds{T} {f(t,t) dt}, 
\end{equation}
where $F_{ij}$ are expansion coefficients~\eqref{eq1} of the function $f$ relative to the basis $\{q_i q_j\}$, and the integral on the right-hand side is understood as follows:
\[
  \int_\mathds{T} {f(t,t) dt} = \lim\limits_{\varepsilon \to 0} \int_\mathds{T} {\operatorname{S}_\varepsilon f(t,t) dt}, \ \ \
  \operatorname{S}_\varepsilon f(t,\tau) = \frac{1}{4\varepsilon^2} \int_{\begin{smallmatrix} \hfill |t-\theta| \leqslant \varepsilon \\ |\tau-\vartheta| \leqslant \varepsilon \end{smallmatrix}} {f(\theta,\vartheta) d\theta d\vartheta},
\]
i.e., $\operatorname{S}_\varepsilon$ is the averaging operator. For the last relation, the function $f$ is extended by zero outside the square $\mathds{T}^2$.
\end{Theorem}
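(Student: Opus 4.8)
The plan is to exploit the trace-class factorization~\eqref{eq2}, $f(t,\tau) = \int_\mathds{T} f_1(t,\xi) f_2(\xi,\tau)\,d\xi$, and to show that both sides of~\eqref{eq3} equal the single quantity
\[
  I = \int_{\mathds{T}^2} f_1(t,\xi)\, f_2(\xi,t)\, d\xi\, dt,
\]
which is finite by the Cauchy--Schwarz inequality since $f_1,f_2 \in L_2(\mathds{T}^2)$. Throughout I would extend $f_1$ by zero in its first argument and $f_2$ by zero in its second argument, so that the factorization continues to hold off $\mathds{T}^2$ and stays consistent with the zero extension of $f$ prescribed in the statement.

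For the matrix trace, I would substitute the factorization into~\eqref{eq1} and apply Fubini to obtain $F_{ij} = \int_\mathds{T} a_i(\xi)\, b_j(\xi)\, d\xi$, where $a_i(\xi) = \int_\mathds{T} f_1(t,\xi) q_i(t)\,dt$ and $b_j(\xi) = \int_\mathds{T} f_2(\xi,\tau) q_j(\tau)\,d\tau$ are, for almost every $\xi$, the Fourier coefficients of $f_1(\cdot,\xi)$ and $f_2(\xi,\cdot)$ relative to $\{q_i\}$. Summing over the diagonal and interchanging summation with integration—legitimate because $\sum_i |a_i(\xi)\,b_i(\xi)| \le \|f_1(\cdot,\xi)\|_{L_2}\,\|f_2(\xi,\cdot)\|_{L_2}$ by Cauchy--Schwarz and Parseval, which is integrable in $\xi$ by a further Cauchy--Schwarz—I would apply Parseval's identity in the form $\sum_i a_i(\xi)\,b_i(\xi) = \int_\mathds{T} f_1(t,\xi) f_2(\xi,t)\,dt$ for almost every $\xi$. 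Integrating in $\xi$ then gives $\sum_{i} F_{ii} = I$.

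For the integral trace, I would insert the factorization into the definition of $\operatorname{S}_\varepsilon f$ and use Fubini to separate the two averagings, writing
\[
  \operatorname{S}_\varepsilon f(t,\tau) = \int_\mathds{T} (A_\varepsilon f_1)(t,\xi)\, (A_\varepsilon f_2)(\xi,\tau)\, d\xi,
\]
where $A_\varepsilon$ denotes the one-dimensional sliding average $\frac{1}{2\varepsilon}\int_{|\cdot|\le\varepsilon}$ applied to the first argument of $f_1$ and to the second argument of $f_2$. Setting $\tau = t$ and integrating in $t$ expresses $\int_\mathds{T}\operatorname{S}_\varepsilon f(t,t)\,dt$ as the $L_2(\mathds{T}^2)$ inner product of the two averaged kernels. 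Since the sliding average is an approximate identity, $A_\varepsilon f_1 \to f_1$ and $A_\varepsilon f_2 \to f_2$ in $L_2(\mathds{T}^2)$, and continuity of the inner product yields $\lim_{\varepsilon\to0}\int_\mathds{T}\operatorname{S}_\varepsilon f(t,t)\,dt = I$. Comparison with the previous paragraph establishes~\eqref{eq3}.

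The main obstacle I anticipate is the careful justification of the two limiting and interchange steps: the $L_2$-convergence of the averaging operator to the identity, including the boundary effects introduced by extending by zero outside $\mathds{T}^2$, and the Fubini/Parseval manipulations, which require controlling everything uniformly in the auxiliary variable $\xi$. The convergence $A_\varepsilon f \to f$ in $L_2$ itself follows from density of continuous functions and Minkowski's integral inequality; the delicate part is the bookkeeping that keeps the factorization valid after zero extension and guarantees that the inner product of the averaged kernels converges to $I$ rather than to a boundary-corrupted value.
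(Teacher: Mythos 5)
The paper does not prove Theorem~\ref{thm1}; it is quoted from the cited references of Brislawn and Birman, and your argument is essentially the standard proof given there: factor the kernel as in~\eqref{eq2}, identify both the matrix trace (via Parseval in the auxiliary variable $\xi$, with the interchange of sum and integral controlled by Cauchy--Schwarz and Bessel) and the averaged integral trace (via $L_2$-convergence of the one-dimensional sliding averages and continuity of the inner product) with $\int_{\mathds{T}^2} f_1(t,\xi) f_2(\xi,t)\, d\xi\, dt$. Your proposal is correct in outline and consistent with the zero-extension convention in the statement, so there is nothing to compare against within the paper itself.
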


Note that the series on the right-hand side of the formula~\eqref{eq3} converges absolutely, and its sum does not depend on the choice of a basis $\{q_i\}$.

\begin{Proposition}\label{prp1}
Linear operators $\operatorname{F}$ with kernels
\begin{align}
  f(t,\tau) = t^n \tau^{m+n} 1(t-\tau) + \tau^n t^{m+n} 1(\tau-t) = f(\tau,t), \label{eq4} \\
  f(t,\tau) = t^{m+n} \tau^n 1(t-\tau) + \tau^{m+n} t^n 1(\tau-t) = f(\tau,t), \label{eq5}
\end{align}
where $m \in \mathds{N}$ and $n \in \{0,1,2,\dots\}$, are trace class operators.
\end{Proposition}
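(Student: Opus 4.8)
The plan is to exhibit, for each of the two kernels, an explicit factorization of the form~\eqref{eq2} up to a degenerate (rank-one) remainder, and then to invoke that trace class operators form a linear space. The first step is to rewrite the kernels symmetrically in terms of $\max(t,\tau)$ and $\min(t,\tau)$. Since in both~\eqref{eq4} and~\eqref{eq5} the larger of the two arguments carries one exponent and the smaller the complementary one, using $\max(t,\tau)^b \min(t,\tau)^b = (t\tau)^b$ one checks directly that
\[
  \text{\eqref{eq4}}\colon \ f(t,\tau) = (t\tau)^n \min(t,\tau)^m, \qquad \text{\eqref{eq5}}\colon \ f(t,\tau) = (t\tau)^n \max(t,\tau)^m .
\]
This isolates the part responsible for the jump across the diagonal, namely $\min(t,\tau)^m$ or $\max(t,\tau)^m$, from the harmless separable weight $(t\tau)^n$.

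Next I would realize the power of $\min$ or $\max$ as a composition integral. By the fundamental theorem of calculus, and since $1(t-\xi)1(\tau-\xi)$ is the indicator of $\xi < \min(t,\tau)$ while $1(\xi-t)1(\xi-\tau)$ is the indicator of $\xi > \max(t,\tau)$,
\[
  \min(t,\tau)^m = t_0^m + m\!\int_\mathds{T}\! \xi^{m-1} 1(t-\xi) 1(\tau-\xi)\,d\xi, \qquad \max(t,\tau)^m = T^m - m\!\int_\mathds{T}\! \xi^{m-1} 1(\xi-t) 1(\xi-\tau)\,d\xi .
\]
Multiplying by $(t\tau)^n$ and distributing presents each kernel as a constant multiple of the separable kernel $t^n\tau^n$ plus an integral $\int_\mathds{T} f_1(t,\xi) f_2(\xi,\tau)\,d\xi$, where for~\eqref{eq4}
\[
  f_1(t,\xi) = m\,t^n \xi^{m-1} 1(t-\xi), \qquad f_2(\xi,\tau) = \tau^n 1(\tau-\xi),
\]
and for~\eqref{eq5} the same with $f_1(t,\xi) = -m\,t^n \xi^{m-1} 1(\xi-t)$ and $f_2(\xi,\tau) = \tau^n 1(\xi-\tau)$.

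It then remains to check the two pieces. Because $t^n$, $\xi^{m-1}$ and the step functions are bounded on the compact square $\mathds{T}^2$, the functions $f_1,f_2$ are bounded and measurable, hence lie in $L_2(\mathds{T}^2)$; thus the integral term is the kernel of a trace class operator by the definition~\eqref{eq2}. The remaining term $t_0^m\, t^n\tau^n$ (resp.\ $T^m\, t^n\tau^n$) is separable, i.e.\ the kernel of a rank-one operator, and it too admits a representation~\eqref{eq2}, for instance $u(t)v(\tau) = \int_\mathds{T} \bigl(u(t)/\sqrt{T-t_0}\bigr)\bigl(v(\tau)/\sqrt{T-t_0}\bigr)\,d\xi$. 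Since a sum of trace class operators is again trace class, both operators~\eqref{eq4} and~\eqref{eq5} are trace class.

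I expect the only real subtlety to be conceptual rather than computational: a one-sided Volterra kernel such as $\varphi(t)\psi(\tau)1(t-\tau)$ alone is merely Hilbert--Schmidt and need not be trace class, so the argument must genuinely exploit the symmetric two-sided structure. The mechanism that makes this work is precisely the representation of $\min(t,\tau)^m$ and $\max(t,\tau)^m$ through the paired indicators $1(t-\xi)1(\tau-\xi)$ and $1(\xi-t)1(\xi-\tau)$, which manufactures the composition~\eqref{eq2}. The lone technical wrinkle is the boundary constant $t_0^m$ or $T^m$ produced by integrating over $[t_0,T]$: it cannot be folded into a single product and is instead split off as the rank-one term above, which is why the proof proceeds through the linearity of the trace class rather than through a single factorization.
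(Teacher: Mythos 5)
Your proposal is correct and follows essentially the same route as the paper: the same factorization $f_1(t,\xi)=m\,t^n\xi^{m-1}1(t-\xi)$, $f_2(\xi,\tau)=\tau^n1(\tau-\xi)$ (and its mirror for~\eqref{eq5}), with the boundary constant $t_0^m$ or $T^m$ split off as a separable kernel and absorbed via linearity of the trace class. Your preliminary rewriting through $\min(t,\tau)^m$ and $\max(t,\tau)^m$ is only a cosmetic reorganization of the paper's computation.
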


\begin{proof}
Let $f_1(t,\tau) = m t^n \tau^{m-1} 1(t-\tau)$ and $f_2(t,\tau) = \tau^n 1(\tau - t)$. By using the representation~\eqref{eq2}, we have
\begin{align*}
  f(t,\tau) & = m t^n \tau^n \int_\mathds{T} {\xi^{m-1} 1(t-\xi) 1(\tau-\xi) d\xi} = m t^n \tau^n \int_{t_0}^{\min\{t,\tau\}} {\xi^{m-1} d\xi} \\
  & = t^n \tau^n \min\{t^m,\tau^m\} - t^n \tau^n t_0^m = t^n \tau^{m+n} 1(t-\tau) + t^{n+m} \tau^n 1(\tau-t) - t^n \tau^n t_0^m.
\end{align*}

The last term $\tau^n t^n t_0^m$ defines a trace class operator with the simplest kernel; hence, the function~\eqref{eq4} defines a trace class operator (such operators form a linear space~\cite{Ref9}).

Next, let $f_1(t,\tau) = m t^n \tau^{m-1} 1(\tau - t)$ and $f_2(t,\tau) = \tau^n 1(t-\tau)$. Then
\begin{align*}
  f(t,\tau) & = m t^n \tau^n \int_\mathds{T} {\xi^{m-1} 1(\xi-t) 1(\xi-\tau) d\xi} = m t^n \tau^n \int_{\max\{t,\tau\}}^T {\xi^{m-1} d\xi} \\
  & = t^n \tau^n T^m - t^n \tau^n \max\{t^m,\tau^m\} = t^n \tau^n T^m - t^{n+m} \tau^n 1(t-\tau) - \tau^{m+n} t^n 1(\tau-t).
\end{align*}

By similar reasoning, one can prove that the function~\eqref{eq5} also defines a trace class operator.
\end{proof}

\section{The main result}\label{secMain}

In this section, we will present the main result of this article in the form of a theorem.

\begin{Theorem}\label{thm2}
Let $\varphi,\psi \in L_2(\mathds{T})$, and $\{q_i\}$ is a basis of $L_2(\mathds{T})$. Then
\begin{equation}\label{eq6}
  \sum\limits_{i=0}^\infty \int_\mathds{T} {\varphi(t) q_i(t) \int_{t_0}^t {\psi(\tau) q_i(\tau) d\tau} dt} = \frac{1}{2} (\varphi,\psi)_{L_2(\mathds{T})}.
\end{equation}
\end{Theorem}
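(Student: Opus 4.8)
The plan is to recognize the left-hand side of~\eqref{eq6} as a matrix trace and identify it with an integral trace through Theorem~\ref{thm1}. Writing $f(t,\tau)=\varphi(t)\psi(\tau)1(t-\tau)$, the inner integral $\int_{t_0}^t\psi(\tau)q_i(\tau)\,d\tau$ equals $\int_\mathds{T}\psi(\tau)q_i(\tau)1(t-\tau)\,d\tau$, so each summand is exactly the diagonal coefficient $F_{ii}=\int_{\mathds T^2}f(t,\tau)q_i(t)q_i(\tau)\,dt\,d\tau=\langle\operatorname{F}q_i,q_i\rangle$ of the operator $\operatorname{F}$ with kernel $f$. Thus the statement asserts $\sum_i F_{ii}=\tfrac12(\varphi,\psi)_{L_2(\mathds T)}$. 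Since $q_i(t)q_i(\tau)$ is symmetric, $F_{ii}$ depends only on the symmetric part $f_{\mathrm{sym}}(t,\tau)=\tfrac12\big(\varphi(t)\psi(\tau)1(t-\tau)+\varphi(\tau)\psi(t)1(\tau-t)\big)$, and I would aim to apply Theorem~\ref{thm1} to $f_{\mathrm{sym}}$, whose averaged diagonal is $\operatorname{S}_\varepsilon f_{\mathrm{sym}}(t,t)\to\tfrac12\varphi(t)\psi(t)$ (each step function contributes $\tfrac12$ on the diagonal), giving the integral trace $\tfrac12(\varphi,\psi)$.

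First I would settle the case of polynomial weights. For $\varphi(t)=t^p$, $\psi(t)=t^q$ the symmetrized kernel $2f_{\mathrm{sym}}$ is precisely~\eqref{eq4} (when $p<q$, with $n=p$, $m=q-p$) or~\eqref{eq5} (when $p>q$), and is rank one when $p=q$; in every case Proposition~\ref{prp1} shows $\operatorname{F}_{\mathrm{sym}}$ is a trace class operator, so Theorem~\ref{thm1} applies and yields $\sum_i F_{ii}=\int_\mathds T f_{\mathrm{sym}}(t,t)\,dt=\tfrac12\int_\mathds T t^{p+q}\,dt=\tfrac12(\varphi,\psi)$. Both sides of~\eqref{eq6} being bilinear in $(\varphi,\psi)$, the identity then holds for all polynomial $\varphi,\psi$.

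The remaining task is to pass from polynomials to arbitrary $\varphi,\psi\in L_2(\mathds T)$ by density, and a clean auxiliary identity organizes this. Integrating by parts term by term and using $\big(\int_\mathds T\varphi q_i\big)\big(\int_\mathds T\psi q_i\big)=(\varphi,q_i)(\psi,q_i)$, one gets $F_{ii}+\widetilde F_{ii}=(\varphi,q_i)(\psi,q_i)$, where $\widetilde F_{ii}$ is the same expression with $\varphi$ and $\psi$ interchanged; by Parseval this sums absolutely to $(\varphi,\psi)$ for every $\varphi,\psi\in L_2(\mathds T)$. Hence~\eqref{eq6} is equivalent to the symmetry $\sum_i F_{ii}=\sum_i\widetilde F_{ii}$, i.e. to the vanishing of the antisymmetric sum $\sum_i(F_{ii}-\widetilde F_{ii})$, which is already known on the dense set of polynomials.

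The main obstacle is this limiting step, and it is genuinely the hard part. The operator $\operatorname{F}$ is Hilbert--Schmidt but in general not trace class---for $\varphi=\psi\equiv1$ it is the Volterra operator, whose singular values decay like $1/n$---so Theorem~\ref{thm1} cannot be invoked directly for rough weights and $\sum_i F_{ii}$ is only conditionally convergent; moreover the truncated lower-triangular kernels $1(t-\tau)\sum_{i\le N}q_i(t)q_i(\tau)$ are not uniformly bounded operators, so no crude estimate suffices. The route I would pursue is to show that the antisymmetric part, whose symmetrized kernel $\tfrac12\big(\varphi(t)\psi(\tau)-\varphi(\tau)\psi(t)\big)\operatorname{sgn}(t-\tau)$ vanishes on the diagonal, is in fact trace class with a bound of the form $C\|\varphi\|_{L_2(\mathds T)}\|\psi\|_{L_2(\mathds T)}$: the vanishing of the factor $\varphi(t)\psi(\tau)-\varphi(\tau)\psi(t)$ on $t=\tau$ cancels the jump of $\operatorname{sgn}(t-\tau)$ and is what should restore trace class membership and continuity. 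Granting such a bound, polynomial approximation of $\varphi,\psi$ converges in trace norm, the antisymmetric sum is continuous and vanishes in the limit, and~\eqref{eq6} follows for all $\varphi,\psi\in L_2(\mathds T)$.
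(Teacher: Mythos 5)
Your overall strategy coincides with the paper's: symmetrize the kernel, settle monomial weights via Proposition~\ref{prp1} (your identification of $2f_{\mathrm{sym}}$ with~\eqref{eq4}, with~\eqref{eq5}, or with a rank-one kernel according to $p<q$, $p>q$, $p=q$ is exactly the paper's case analysis), extend to polynomial weights by bilinearity, and then try to reach general $L_2$ weights by density. Your integration-by-parts identity $F_{ii}+\widetilde F_{ii}=(\varphi,q_i)(\psi,q_i)$ is essentially the paper's observation $G_{ij}=G^*_{ji}$ combined with Parseval, and the resulting reduction of~\eqref{eq6} to the vanishing of the antisymmetric sum is a clean way to organize the problem and correctly locates where the difficulty lives.

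The gap is the step you yourself flag: the passage from polynomials to arbitrary $\varphi,\psi\in L_2(\mathds{T})$ rests entirely on a trace-norm bound $C\|\varphi\|_{L_2(\mathds{T})}\|\psi\|_{L_2(\mathds{T})}$ for the operator with kernel $\tfrac12\bigl(\varphi(t)\psi(\tau)-\varphi(\tau)\psi(t)\bigr)\operatorname{sgn}(t-\tau)$, and you prove nothing toward it beyond the heuristic that the factor vanishes on the diagonal. This is not a routine estimate: expanding $\operatorname{sgn}(t-\tau)$ in a Fourier series writes the operator as a sum of rank-two pieces whose trace norms carry the non-summable weights of order $1/n$, so the cancellation must be exploited quantitatively, and ``granting such a bound'' is precisely the theorem's hard content restated. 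The paper closes this step by a different and weaker requirement: it first establishes~\eqref{eq7} for Legendre partial sums $\varphi_n,\psi_m$, then fixes $n$ and lets $m\to\infty$, arguing that for the \emph{fixed polynomial} (hence bounded) weight $\varphi_n$ the left-hand side is a continuous linear functional of the second argument --- obtained by extension by continuity from the dense set of polynomials, where it equals $(\varphi_n,\cdot)$, using continuity of the trace on the trace class --- and only afterwards lets $n\to\infty$. This iterated, one-variable-at-a-time limit never needs a bound that is uniform in both arguments simultaneously, which is exactly the bound your sketch presupposes. As written, your argument therefore does not reach arbitrary $\varphi,\psi\in L_2(\mathds{T})$; to complete it you would need either to prove your conjectured bilinear trace-norm estimate or to replace the single limit by the paper's two-step limit with one argument kept polynomial.
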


\begin{proof}
Consider functions $g,g^* \in L_2(\mathds{T}^2)$:
\[
  g(t,\tau) = \varphi(t) \psi(\tau) 1(t-\tau), \ \ \ g^*(t,\tau) = \psi(t) \varphi(\tau) 1(\tau-t) = g(\tau,t).
\]

For these functions, expansion coefficients $G_{ij},G_{ij}^*$  relative to the basis $\{q_i q_j\}$ are determined by the formulae:
\[
  G_{ij} = \int_\mathds{T} {\varphi(t) q_i(t) \int_{t_0}^t {\psi(\tau) q_j(\tau) d\tau} dt}, \ \ \ G_{ij}^* = \int_\mathds{T} {\psi(t) q_i(t) \int_t^T {\varphi(\tau) q_j(\tau) d\tau} dt},
\]
and for them the condition $G_{ij} = G_{ji}^*$, $i,j = 0,1,2,\dots$, is satisfied.

Further, let
\[
  f(t,\tau) = g(t,\tau) + g^*(t,\tau) = g(t,\tau) + g(\tau,t) = f(\tau,t), \ \ \ f \in L_2(\mathds{T}^2),
\]
then expansion coefficients $F_{ij}$ of the function $f$ are determined by the linearity:
\[
  F_{ij} = G_{ij} + G_{ij}^* = G_{ij} + G_{ji}, \ \ \ F_{ii} = 2 G_{ii}.
\]

Additionally, we can write the integral trace for the function $f$:
\[
 \lim\limits_{\varepsilon \to 0} \operatorname{S}_\varepsilon f(t,t) = \varphi(t) \psi(t), \ \ \ \operatorname{tr} f = \int_\mathds{T} {\varphi(t) \psi(t) dt} = (\varphi,\psi)_{L_2(\mathds{T})},
\]
and this means that the equality~\eqref{eq6} is equivalent to the expression~\eqref{eq3}, which holds for a trace class operator $\operatorname{F}$ with some kernel $f$ according to Theorem~\ref{thm1}.

Next, we can conclude that the integral operator $\operatorname{F}$ is a trace class operator if
\[
  f(t,\tau) = g(t,\tau) + g^*(t,\tau), \ \ \ \varphi(t) = t^{n_1}, \ \ \ \psi(\tau) = \tau^{n_2}, \ \ \ n_1,n_2 = 0,1,2,\ldots
\]

Indeed, we obtain the simplest kernel $f(t,\tau) = (t \tau)^n$ (it corresponds to a trace class operator) if $n = n_1 = n_2$. When $n_1 \neq n_2$, the desired result follows from Proposition~\ref{prp1}, i.e., for the function~\eqref{eq4}, we assume $n = n_1 < n_2 = m + n$, and for the function~\eqref{eq5}, we assume $m + n = n_1 > n_2 = n$.

Thus, functions $\varphi,\psi$ can be polynomials, including Legendre polynomials (trace class operators form a linear space~\cite{Ref9}). Arbitrary functions from $L_2(\mathds{T})$ are approximated with a desired accuracy by polynomials. In this case, the function $f$ is represented as a linear combination of functions of the form $(t \tau)^n$, as well as functions~\eqref{eq4} and~\eqref{eq5}.

Finally, let $\varphi,\psi \in L_2(\mathds{T})$ and
\[
  \varphi = \lim\limits_{n \to \infty} \varphi_n, \ \ \ \psi = \lim\limits_{n \to \infty} \psi_m,
\]
where
\[
  \varphi_n(t) = \sum\limits_{i=0}^n \Phi_i \hat{P}_i(t), \ \ \ \psi_m(\tau) = \sum\limits_{i=0}^m \Psi_i \hat{P}_i(\tau), \ \ \ n,m \in \{0,1,2,\ldots\},
\]
and $\Phi_i,\Psi_i$ are expansion coefficients of functions $\varphi,\psi$, respectively, relative to Legendre polynomials $\{\hat P_i\}$. Then we establish the following equality for arbitrary $n$ and $m$:
\begin{equation}\label{eq7}
  \sum\limits_{i=0}^\infty \left[ \, \int_\mathds{T} {\varphi_n(t) q_i(t) \int_{t_0}^t {\psi_m(\tau) q_i(\tau) d\tau} dt} + \int_\mathds{T} {\psi_m(t) q_i(t) \int_{t_0}^t {\varphi_n(\tau) q_i(\tau) d\tau} dt} \right] = (\varphi_n,\psi_m)_{L_2(\mathds{T})},
\end{equation}
where the series on the left-hand side of the last expression converges absolutely, and its sum does not depend on the choice of a basis $\{q_i\}$.

By fixing $n$ in the formula~\eqref{eq7}, we have the bounded and therefore continuous linear functional in $L_2(\mathds{T})$, which is defined by the function $\varphi_n$ (on the left-hand side of the formula, it is a continuous functional in the space of trace-class operators~\cite{Ref9}, but it can be considered as a linear functional in the space $L_2(\mathds{T})$ using the extension by continuity~\cite{Ref10}). Putting $m \to \infty$, we obtain the continuous linear functional defined by the function $\psi$:
\[
  \sum\limits_{i=0}^\infty \left[ \, \int_\mathds{T} {\varphi_n(t) q_i(t) \int_{t_0}^t {\psi(\tau) q_i(\tau) d\tau} dt} + \int_\mathds{T} {\psi(t) q_i(t) \int_{t_0}^t {\varphi_n(\tau) q_i(\tau) d\tau} dt} \right] = (\varphi_n,\psi)_{L_2(\mathds{T})}.
\]

Next, putting $n \to \infty$, we obtain the expression~\eqref{eq3}, which proves the equality~\eqref{eq6}.
\end{proof}

The proven theorem allows one to justify that functions
\[
  f(t_1,\dots,t_k) = \psi_1(t_1) \dots \psi_k(t_k) 1(t_k-t_{k-1}) \dots 1(t_2-t_1)
\]
belong to the space $L_2^{\text{tr}(j_1 \dots j_k)}(\mathds{T}^k)$ defined in~\cite{Ref4}. This class of functions is characterized by the condition that there exist matrix traces for their expansion coefficients (they are numbered by $k$ indices) for pairs of indices for which the corresponding numbers $j_1,\dots,j_k$ coincide, and traces should not depend on the choice of a basis. These matrix traces, i.e., convolutions of expansion coefficients, define integral traces of the function $f$ over the corresponding pairs of variables (correspondence is understood in the sense of the order of numbers $j_1,\dots,j_k$, indices $i_1,\dots,i_k$ that are used for expansion coefficients, and variables $t_1,\dots,t_k$).

By applying Theorem~\ref{thm2}, we can show that matrix traces for any pair of neighbor indices $i_l,i_{l+1}$, where $l = 1,\dots,k-1$, define expansion coefficients of the function of $k - 2$ variables but with the same structure as the function $f$, which allows applying Theorem~\ref{thm2} ``iteratively.'' And matrix traces for any pair of indices $i_l,i_m$, where $l \in \{1,\dots,k - 2\}$, $m \in \{3,\dots,k\}$, $m - l > 1$, are equal to zero, and the corresponding integral traces are the zero function of $k - 2$ variables.

The importance of this result is related to the fact that the multiple Stratonovich stochastic integral for such a function is the iterated stochastic integral, which corresponds to a $k$th-order system of linear stochastic differential equations, and $\psi_1,\dots,\psi_k$ are weight functions. Solving such systems, i.e., modeling of iterated Stratonovich stochastic integrals, is necessary for constructing numerical methods to solve systems of nonlinear stochastic differential equations with high orders of strong convergence~\cite{Ref1, Ref2}.

Note that Theorem~\ref{thm2} can be proved by assuming that $L_2(\mathds{T}^2)$ is the space of complex-valued square-integrable functions. This entails a change in the formula defining the inner product. In addition, in the proof of Theorem~\ref{thm2}, one can represent the functions $\varphi,\psi \in L_2(\mathds{T})$ by orthogonal expansions with complex exponential functions and use the following result instead of Proposition~\ref{prp1}.

\begin{Proposition}\label{prp2}
The linear operator $\operatorname{F}$ with the kernel
\[
  f(t,\tau) = \me^{\mi n t} \me^{\mi m \tau} 1(t-\tau) + \me^{\mi n \tau} \me^{\mi m t} 1(\tau-t) = f(\tau,t),
\]
where $m,n \in \mathds{Z}$, $m \neq 0$, $\mi = \sqrt{-1}$, is a trace class operator.
\end{Proposition}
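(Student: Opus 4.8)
The plan is to mirror the proof of Proposition~\ref{prp1}: exhibit an explicit factorization of the kernel through the representation~\eqref{eq2}, up to a simplest (rank-one) remainder, and then invoke that trace class operators form a linear space~\cite{Ref9}. The one genuinely new ingredient is that the primitive of a complex exponential is again a bounded exponential, namely $\int_{t_0}^{s}{\me^{\mi k\xi}\,d\xi}=\frac{1}{\mi k}(\me^{\mi k s}-\me^{\mi k t_0})$, which is available precisely when the frequency $k$ does not vanish. This is where the hypothesis on $m$ enters, exactly as $m\in\mathds{N}$ (a nonvanishing ``gap'') was used in Proposition~\ref{prp1} to integrate $\xi^{m-1}$.

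First I would rewrite the kernel so as to separate a factorable prefactor from a single coupling term. Since $f$ equals $\me^{\mi n t}\me^{\mi m\tau}$ on $\{t\geqslant\tau\}$ and $\me^{\mi m t}\me^{\mi n\tau}$ on $\{\tau\geqslant t\}$, it can be written as $\me^{\mi n\max\{t,\tau\}}\me^{\mi m\min\{t,\tau\}}$, and, using $\max\{t,\tau\}+\min\{t,\tau\}=t+\tau$, this becomes
\[
  f(t,\tau) = \me^{\mi n(t+\tau)}\,\me^{\mi(m-n)\min\{t,\tau\}}.
\]
The gain is that the prefactor $\me^{\mi n(t+\tau)}=\me^{\mi n t}\me^{\mi n\tau}$ now splits as a function of $t$ times a function of $\tau$, while all the coupling between the variables is carried by the single factor $\me^{\mi(m-n)\min\{t,\tau\}}$.

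Next I would reproduce this $\min$-factor by a convolution over $\{\xi\leqslant\min\{t,\tau\}\}$, just as in Proposition~\ref{prp1}. Setting
\[
  f_1(t,\xi) = \mi(m-n)\,\me^{\mi n t}\me^{\mi(m-n)\xi}1(t-\xi), \qquad f_2(\xi,\tau) = \me^{\mi n\tau}1(\tau-\xi),
\]
both bounded on $\mathds{T}^2$ and hence in $L_2(\mathds{T}^2)$, the representation~\eqref{eq2} yields
\[
  \int_\mathds{T}{f_1(t,\xi)f_2(\xi,\tau)\,d\xi} = \mi(m-n)\,\me^{\mi n t}\me^{\mi n\tau}\!\int_{t_0}^{\min\{t,\tau\}}\!\!{\me^{\mi(m-n)\xi}\,d\xi},
\]
which after the elementary integration equals $f(t,\tau)-\me^{\mi(m-n)t_0}\me^{\mi n t}\me^{\mi n\tau}$. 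The subtracted term is a simplest (rank-one) kernel, hence a trace class operator, so $f$ defines a trace class operator because such operators form a linear space~\cite{Ref9}.

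The only obstacle is the evaluation of the inner integral, which requires the integrated frequency $m-n$ to be nonzero; when it vanishes the factor $\me^{\mi(m-n)\min\{t,\tau\}}$ is constant and $f(t,\tau)=\me^{\mi n(t+\tau)}$ is itself rank-one, so the conclusion holds in every case. If instead one wishes to use the stated hypothesis $m\neq 0$ literally, one applies the primitive identity $\me^{\mi m\tau}=\me^{\mi m t}-\mi m\int_\tau^t{\me^{\mi m\xi}\,d\xi}$ (and its mirror) to the two summands of $f$; this decomposes $f$ into the kernel $\me^{\mi(n+m)\max\{t,\tau\}}$, shown to be trace class by convolving over $\{\xi\geqslant\max\{t,\tau\}\}$ exactly as in Proposition~\ref{prp1}, together with two convolutions of the form~\eqref{eq2}, and again linearity finishes the argument.
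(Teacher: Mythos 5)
Your proof is correct and follows exactly the route the paper indicates for this statement (the paper omits the details, saying only that the argument mirrors Proposition~\ref{prp1} and rests on the representation~\eqref{eq2}): you factor the kernel as a product of two Volterra-type $L_2$ kernels up to a rank-one remainder, using that a nonzero frequency can be integrated in closed form just as $\xi^{m-1}$ was integrated there. The reduction $f(t,\tau)=\me^{\mi n(t+\tau)}\me^{\mi(m-n)\min\{t,\tau\}}$ is a clean way to organize the computation, and your separate treatment of the degenerate case $m=n$ (where the kernel is rank one) closes the only remaining case.
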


For brevity, we omit the proof, since it is similar to the proof of Proposition~\ref{prp1}, and it is based on the representation~\eqref{eq2}.

\section*{Appendix}

\small

The theory of orthogonal expansions of multiple and iterated Stratonovich stochastic integrals is developing. Since the publication of this article, the new results have appeared:
\begin{itemize}
  \item Kuznetsov, D.F. A new approach to the series expansion of iterated Stratonovich stochastic integrals with respect to components of the multidimensional Wiener process. The case of arbitrary complete orthonormal systems in Hilbert space. {\em Differ. Uravn. Protsesy Upr.} {\bf 2024}, {\em 2}, 73--170. \\ \url{https://doi.org/10.21638/11701/spbu35.2024.206}
  \item Kuznetsov, D.F. A new approach to the series expansion of iterated Stratonovich stochastic integrals with respect to components of the multidimensional Wiener process. The case of arbitrary complete orthonormal systems in Hilbert space. II. {\em Differ. Uravn. Protsesy Upr.} {\bf 2024}, {\em 4}, 104--190. \\ \url{https://doi.org/10.21638/11701/spbu35.2024.406}
  \item Kuznetsov, D.F. A new approach to the series expansion of iterated Stratonovich stochastic integrals with respect to components of the multidimensional Wiener process. The case of arbitrary complete orthonormal systems in Hilbert space. III. {\em Differ. Uravn. Protsesy Upr.} {\bf 2025}, {\em 3}, 118--153. \\ \url{https://doi.org/10.21638/11701/spbu35.2025.308}
\end{itemize}

The considered approach is directly applied in the following author's article:
\begin{itemize}
  \item Rybakov, K. On traces of linear operators with symmetrized Volterra-type kernels. {\em Symmetry} {\bf 2023}, {\em 15(10)}, 1821. \url{https://doi.org/10.3390/sym15101821}
\end{itemize}

\end{document}